\documentclass[draft]{amsart}

\usepackage{amsmath, amsthm, amssymb, amstext, amsfonts}
\usepackage{enumerate}
\usepackage{graphicx}
\usepackage[T1]{fontenc}
\usepackage[applemac]{inputenc}

\theoremstyle{plain}

\newtheorem{thm}{Theorem}[section]
\newtheorem{lem}[thm]{Lemma}

\newtheorem{prop}[thm]{Proposition}

\newtheorem*{prob*}{Problem}

\theoremstyle{definition}

\newcommand{\inv}{\ensuremath{^{-1}}}

\newcommand{\rand}{\partial}

\newcommand{\sub}{\subseteq}

\def\ta{tree amalgamation}
\def\qt{quasi-tran\-si\-tive}
\def\qi{quasi-iso\-metric}
\def\qiy{quasi-iso\-me\-try}
\def\qis{quasi-iso\-me\-tries}

\newcommand{\comment}[1]{}

\newcommand{\nat}{{\mathbb N}}


\newenvironment{txteq*}
  {
    \begin{equation*}
    \begin{minipage}[c]{0.85\textwidth} 
    \em                                
  }
  {\end{minipage}\end{equation*}\ignorespacesafterend}

\begin{document}

\title{Tree amalgamations and hyperbolic boundaries}
\author{Matthias Hamann}
\thanks{Supported by the Heisenberg-Programme of the Deutsche Forschungsgemeinschaft (DFG Grant HA 8257/1-1).}
\address{Matthias Hamann, Mathematics Institute, University of Warwick, Coventry, UK}
\date{}

\begin{abstract}
We look at \ta s of locally finite \qt\ hyperbolic graphs and prove that the homeomorphism type of the hyperbolic boundary of such a \ta\ only depends on the homeomorphism types of the hyperbolic boundaries of their factors.
Additionally, we show that two locally finite \qt\ hyperbolic graphs have homeomorphic hyperbolic boundaries if and only if the homeomorphism types of the hyperbolic boundaries of the factors of their terminal factorisations coincide.
\end{abstract}

\maketitle

\section{Introduction}\label{sec_Intro}

Tree amalgamations offer a way to construct new graphs out of existing ones similar as new groups can be constructed via free products with amalgamation or HNN-extensions.
(We refer to Section~\ref{sec_ta} for the definition of \ta s.)
In order to investigate geometric properties of multi-ended \qt\ graphs, it is therefore interesting to see how such properties behave with respect to \ta s.
In this paper, we are looking at the interaction of \ta s with hyperbolicity.
Hyperbolic groups, graphs or spaces play an important role since Gromov's paper~\cite{gromov}.
A first observation in~\cite{HLMR} is the following.

\begin{thm}\label{thm_HMLR7.10}\cite[Theorem 7.10]{HLMR}
Two connected locally finite \qt\ \linebreak graphs are hyperbolic if and only if any of their \ta s of bounded adhesion is hyperbolic.\qed
\end{thm}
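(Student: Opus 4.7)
The plan is to establish both implications separately, exploiting the tree $T$ underlying the \ta. Let $G$ denote the \ta\ of factors $G_1, G_2$ of bounded adhesion, so that $G$ comes with a natural projection $\pi\colon G\to T$ whose fibres over vertices of $T$ are isomorphic copies of $G_1$ or $G_2$ and whose fibres over edges of $T$ are the (bounded) identified adhesion sets. Since the hyperbolicity claim is quantitative, I would fix a uniform bound $K$ on the diameters of all adhesion sets from the outset.

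For the direction \emph{\ta\ hyperbolic implies factors hyperbolic}, I would show that each factor copy $H$ sits inside $G$ as a quasi-isometrically embedded subgraph. The key observation is convexity up to bounded error: any excursion of a $G$-path out of $H$ must exit and re-enter through adhesion sets each of diameter at most $K$, and a simple exchange argument shows that such an excursion can be replaced by a detour inside $H$ at an additive cost of at most $K$ per excursion. A \qt ness together with the local finiteness bounds the number of excursions that can occur along a geodesic of given length, so the intrinsic metric on $H$ is quasi-isometric to the induced metric. Hence $G_i$ is \qi\ to a quasi-convex subspace of the hyperbolic graph $G$, and therefore hyperbolic.

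For the harder converse, \emph{factors hyperbolic implies \ta\ hyperbolic}, the plan is to verify the thin-triangle condition directly using $\pi$. Given a geodesic triangle $\Delta=\alpha\cup\beta\cup\gamma$ in $G$, the three projected paths $\pi(\alpha),\pi(\beta),\pi(\gamma)$ form a (geodesic) tripod in $T$, partitioning each side of $\Delta$ into maximal subsegments each lying in a single factor copy $H$. By the quasi-convexity established above, each such subsegment is uniformly close to a geodesic of $H$ with endpoints on the relevant adhesion sets. In every factor copy visited, at most three such subsegments meet and, by hyperbolicity of the factor with constant $\delta_0:=\max\{\delta_1,\delta_2\}$, they form a $\delta_0$-thin triangle inside $H$. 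Reassembling along the tripod in $T$, and absorbing the bounded transition costs at the adhesion sets into the constant, one obtains that $\Delta$ is $\delta$-thin for some uniform $\delta=\delta(\delta_0,K)$.

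The main obstacle I expect is the bookkeeping in the forward direction, specifically the transitions at adhesion sets: a $G$-geodesic need not restrict to a geodesic inside any given factor copy, so one has to verify that the ``factor pieces'' of the triangle really are uniform quasi-geodesics of the factor, and that the thin-triangle constants do not accumulate along the tree. Once the factor pieces are controlled and the projections to $T$ are handled as a tripod, the rest mirrors the standard argument that the Bass--Serre tree controls the large-scale geometry, and the result follows.
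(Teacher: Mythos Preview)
The paper does not prove this theorem at all: it is quoted verbatim from \cite[Theorem~7.10]{HLMR} and closed with a \qedsymbol, so there is no in-paper argument to compare your proposal against.

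As for your sketch itself, the overall two-step strategy (factors are quasi-convex in the amalgamation; geodesic triangles project to a tripod in~$T$ and are analysed factor by factor) is the standard one and is essentially what is carried out in~\cite{HLMR}. One correction to your backward direction: the bound on excursions has nothing to do with quasi-transitivity or local finiteness. It comes purely from the tree structure: any excursion of a $G$-geodesic out of a factor copy~$H$ must leave and return through the \emph{same} adhesion set (since $T$ is a tree), so the excursion itself has length at most~$K$, and replacing it by a shortcut inside that adhesion set already gives $d_H\le d_G + K\cdot(\text{number of excursions})$, with each excursion contributing at least one unit of $G$-length. For the forward direction, you correctly identify the accumulation issue; the resolution is that along each leg of the tripod only \emph{two} of the three sides are present, they pass through the same sequence of adhesion sets, and hence are $K$-close at every transition --- so hyperbolicity of the factor keeps them uniformly close in between, and the thin-triangle constant is paid only once, in the factor copy at the centre of the tripod.
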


Hyperbolic graphs are equipped with a natural boundary, the hyperbolic boundary.
Our first main result says essentially that in a \ta\ of hyperbolic graphs changing the factors without changing the homeomorphism types of their hyperbolic boundary still leads to a \ta\ whose hyperbolic boundary is homeomorphic to the original one.
For this, a \emph{factorisation} of a \qt\ graph $G$ is a tuple $(G_1,\ldots,G_n)$ such that $G$ is obtained by iterated \ta s of all the graphs $G_i$.

\begin{thm}\label{thm_MS1.1}
Let $(G_1,\ldots,G_n)$, $(H_1,\ldots, H_m)$ be factorisations of infinitely-ended \qt\ locally finite hyperbolic graphs $G,H$, respectively, such that the set of homeomorphism types of the hyperbolic boundaries of the factors in $(G_1,\ldots,G_n)$ are the same as those for $(H_1,\ldots, H_m)$.
Then the hyperbolic boundaries $\rand G$ and $\rand H$ are homeomorphic.
\end{thm}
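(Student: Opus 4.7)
The plan is to reduce the statement to a single-step tree-amalgamation version—namely, the assertion announced in the abstract that for a tree amalgamation $G = G_1 \ast G_2$ of two locally finite \qt\ hyperbolic graphs, the homeomorphism type of $\rand G$ depends only on the homeomorphism types of $\rand G_1$ and $\rand G_2$. I treat this as a black box. Combined with induction on $n$, it immediately yields a \emph{substitution principle}: if $\rand G_i' \isom \rand G_i$ and one replaces $G_i$ by $G_i'$ inside the factorisation $(G_1,\ldots,G_n)$, the resulting graph has boundary homeomorphic to $\rand G$. The same argument in fact shows that $\rand G$ depends only on the \emph{multiset} of homeomorphism types of the $\rand G_i$ together with the tree structure used to amalgamate.

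Since the hypothesis only equates the two \emph{sets} (not multisets) of boundary types, the crux of the proof is an \emph{absorption principle}: if $\rand G_i \isom \rand G_j$ for some $i \neq j$, then omitting $G_j$ from the factorisation still produces a graph with boundary homeomorphic to $\rand G$. Granting this, the theorem follows in three moves. First, apply substitution to arrange that both factorisations use factors with identical boundaries (indexed by the common set $\mathcal S$). Second, apply absorption repeatedly on either side to reduce to a factorisation containing exactly one factor per element of $\mathcal S$ (keeping at least two factors by introducing harmless duplicates if needed). Third, invoke substitution one last time to identify the two resulting boundaries.

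The main obstacle is absorption, and this is where the work lies. My approach is geometric: unfolding the iterated tree amalgamation gives an infinite structure tree $T$, each of whose vertices carries a copy of one of the $G_i$, and $\rand G$ decomposes canonically as the union of copies of $\rand G_i$ (indexed by the vertices of $T$) together with the end boundary $\rand T$ of the tree. Because $G$ is infinitely ended and \qt, $T$ is locally finite, infinite, and acted upon with dense orbits, so copies of each boundary type appear in a dense, self-similar pattern. To prove absorption I would set up a back-and-forth homeomorphism between $\rand G$ and the boundary of the graph obtained by omitting $G_j$, matching each region of the old decomposition labelled $G_j$ with a region of the new one labelled $G_i$ (they carry homeomorphic pieces by assumption). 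The difficulty lies in verifying that the decomposition of $\rand G$ into boundary pieces plus end boundary has the uniformity and local-homogeneity properties needed to sustain such a self-similar construction—this is in the spirit of Brouwer's and similar classical characterisations of universal spaces, and I expect the bulk of the technical effort to go into making this back-and-forth continuous and well-defined in the Gromov-boundary topology.
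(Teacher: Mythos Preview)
Your outline correctly identifies the two ingredients needed: a substitution principle (straightforward from the single-step result, Proposition~\ref{prop_MS4.1}, by induction) and an absorption principle (the hard part). However, the paper handles absorption by an entirely different route, and you are making your life substantially harder than necessary.

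The paper does not attempt a back-and-forth on the boundary. Instead it invokes Theorem~\ref{thm_QI1.4}, a result proved elsewhere, which says: if two factorisations of infinitely-ended \qt\ locally finite graphs have the same \emph{set of \qiy\ types} of infinite factors, then the amalgamated graphs are themselves \qi. This single statement, working at the \qiy\ level rather than the boundary level, absorbs all the combinatorial difficulties at once---absorption, duplication, permutation, and independence of the bracketing and the amalgamation data. Since \qis\ induce boundary homeomorphisms (Lemma~\ref{lem_QIinduceHomeo}), one first applies Theorem~\ref{thm_QI1.4} to reduce each side to one factor per \qiy\ type, then applies it again to duplicate factors so that the two lists have the same length and matching boundary types, and finally iterates Proposition~\ref{prop_MS4.1} to finish. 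No direct manipulation of $\rand G$ is needed beyond the two-factor case.

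Your proposed back-and-forth is not obviously wrong, but it is only a sketch, and the part you yourself flag as delicate---uniformity and local homogeneity of the decomposition of $\rand G$ into pieces plus tree ends, sufficient to run a Brouwer-style argument---is genuinely non-trivial and would require real work. There is also a gap you do not flag: after absorption, your third step ``invoke substitution one last time'' assumes the two reduced factorisations can be compared by substitution alone, but substitution as you derive it preserves the bracketing and amalgamation data, whereas the two reduced factorisations on the $G$- and $H$-sides need not share these. You would either have to fold associativity into your back-and-forth argument (your unfolded-structure-tree picture suggests this is possible) or prove it separately. The paper's route via Theorem~\ref{thm_QI1.4} sidesteps both issues completely.
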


The obvious question that arises is what can be said about the reverse implication of Theorem~\ref{thm_MS1.1}?
While it is false in general, we will prove that it holds if we ask it for \emph{terminal factorisations}.
These are factorisations $(G_1,\ldots,G_n)$, where each $G_i$ has at most one end.
In~\cite{H-Accessibility} it was shown that \qt\ locally finite hyperbolic graphs are accessible in the sense of Thomassen and Woess~\cite{ThomassenWoess}.
Thus, they have a terminal factorisation by \cite[Theorem 6.3]{HLMR}.

\begin{thm}\label{thm_MS1.3}
	Let $(G_1,\ldots,G_n)$, $(H_1,\ldots, H_m)$ be terminal factorisations of in\-fi\-nite\-ly-ended \qt\ locally finite hyperbolic graphs $G,H$, respectively.
	Then the hyperbolic boundaries $\rand G$ and $\rand H$ are homeomorphic if and only if the set of homeomorphism types of the hyperbolic boundaries of the factors in $(G_1,\ldots,G_n)$ are the same as those for $(H_1,\ldots, H_m)$.
\end{thm}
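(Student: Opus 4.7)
The \emph{if} direction follows directly from Theorem~\ref{thm_MS1.1}: matching up factors of $(G_1,\ldots,G_n)$ and $(H_1,\ldots,H_m)$ with the same boundary homeomorphism types allows that theorem to conclude $\rand G \cong \rand H$. Hence the real content lies in the \emph{only if} direction, for which the plan is to show that the set of homeomorphism types of the $\rand G_i$ is recoverable from the topology of $\rand G$ alone.

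The strategy is to decompose $\rand G$ topologically into pieces that are in bijection with the copies of the $\rand G_i$ produced by the \ta\ structure. A \ta\ of bounded adhesion glues factors along finite subsets, so each vertex of the underlying structure tree $T$ labelled by $G_i$ contributes an embedded copy of $\rand G_i$ to $\rand G$, and the ends of $T$ contribute additional points that sit in the closure of infinitely many of these copies. Since the factorisation is \emph{terminal}, each $G_i$ has at most one end, which should make the embedded $\rand G_i$ topologically rigid: the tree-end limit points attaching to it cannot split it. The natural way to isolate these copies intrinsically inside $\rand G$ is via a block/cut-point analysis---each embedded $\rand G_i$ should be a maximal connected subset that remains connected after removing any single cut point---mirroring Bowditch's approach to reading JSJ-type data from the topology of a hyperbolic group boundary. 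Any homeomorphism $\rand G\to\rand H$ must then preserve such a canonical decomposition, yielding a bijection between pieces that matches homeomorphism types.

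The main obstacle will be proving rigorously that the embedded copies of $\rand G_i$ are precisely the pieces of this canonical topological decomposition. Particular care is needed when $\rand G_i$ is degenerate (a single point or empty), when several factor-boundary copies accumulate on the same tree-end point, and when finite factors contribute the empty homeomorphism type to the set. Each such case should be handleable by using one-endedness of $G_i$ together with local topological invariants (local connectedness, local cut points) to tell factor-boundary copies apart from the tree-end Cantor-like part, but pinning down the right invariant uniformly across these cases---so that the same decomposition works for any terminal factorisation---is the delicate step on which the proof hinges.
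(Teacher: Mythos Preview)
Your handling of the \emph{if} direction matches the paper's. For the \emph{only if} direction your overall strategy---recover the $\rand G_i$ intrinsically from the topology of $\rand G$---is the right one, but you are working much harder than necessary, and the step you flag as the ``main obstacle'' is in fact not present.

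The paper's argument is essentially a one-liner once Lemma~\ref{lem_canonicalBoundMap} is available. That lemma says the connected components of $\rand(G_1+_T G_2)$ are exactly the images of (i) the connected components of the individual $\rand G_i^u$ and (ii) the points of $\rand T$. Since the factorisation is terminal, each $G_i$ has at most one end: finite factors contribute nothing to the boundary, and one-ended factors have a \emph{connected} boundary (Lemma~\ref{lem_ConCompBoundAreEnds}) which is moreover \emph{infinite} (Lemma~\ref{lem_OneEndBoundInfinite}). The tree boundary $\rand T$ is totally disconnected (Lemma~\ref{lem_treeBound}), so contributes only singletons. Hence the non-singleton connected components of $\rand G$ are precisely the copies of $\rand G_i$ for the one-ended factors~$G_i$, and any homeomorphism $\rand G\to\rand H$ must carry this collection bijectively to the corresponding collection for~$H$.

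Your proposed cut-point/block decomposition \`a la Bowditch is therefore aimed at a phenomenon that does not occur here: the embedded copies of $\rand G_i$ are already full connected components of $\rand G$, not blocks inside a larger component. The tree-end points do not ``attach'' to the factor boundaries in any connectivity sense; they are their own singleton components. Your machinery would ultimately succeed (each connected component is trivially its own block), but it obscures the real content---namely that one-endedness forces $\rand G_i$ to be an entire connected component, and that by Lemma~\ref{lem_OneEndBoundInfinite} it cannot collapse to a singleton---and turns a two-line argument into a substantial project.
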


Martin and \'Swi\k{a}tkowski~\cite{MS-HyperbolicGroupsWithHomeoBound} proved group theoretic versions of Theorems~\ref{thm_MS1.1} and~\ref{thm_MS1.3}.
However, our results do not follow from theirs since it is not known whether every locally finite hyperbolic \qt\ graph is \qi\ to some hyperbolic group.

The question whether every locally finite hyperbolic \qt\ graph is \qi\ to some hyperbolic group is a special case of Woess' problem \cite[Problem 1]{Woess-Topo} whether every locally finite transitive graph is \qi\ to some locally finite Cayley graph.
While his problem was settled in the negative by Eskin et al.~\cite{EFW-DLnotTransitive}, their counterexamples, the Diestel-Leader graphs, are not hyperbolic and neither is another counterexample by Dunwoody~\cite{D-AnInaccessibleGraph}.

In Section~\ref{sec_hyp} we define and discuss hyperbolicity and in Section~\ref{sec_ta} \ta s. In both section, we also state the main preliminary results we need for our main results, which we will prove in Section~\ref{sec_pf}.

\section{Preliminaries}

In this section, we state the main definitions and preliminary results that we need for the proofs of Theorems~\ref{thm_MS1.1} and~\ref{thm_MS1.3}.
First, we state some general definitions and then we look at hyperbolic graphs and \ta s more closely in Sections~\ref{sec_hyp} and~\ref{sec_ta}, respectively.

Let $G$ be a graph.
A \emph{ray} is a one-way infinite path and a \emph{double ray} is a two-way infinite path.
Two rays are \emph{equivalent} if for every finite vertex set $S\sub V(G)$ both rays have all but finitely many vertices in the same component of $G-S$.
This is an equivalence relation whose equivalence classes are the \emph{ends} of~$G$.

We call $G$ \emph{transitive} if its automorphism group acts transitively on its vertex set and \emph{\qt} if its automorphism group acts with only finitely many orbits on~$V(G)$.

Let $G$ and $H$ be graphs.
A map $\varphi\colon V(G)\to V(H)$ is a \emph{\qiy} if there are constants $\gamma\geq 1$, $c\geq 0$ such that 
\[
\gamma\inv d_G(u,v)-c\leq d_H(\varphi(u),\varphi(v))\leq\gamma d_G(u,v)+c
\]
for all $u,v\in V(G)$ and such that $\sup\{d_H(v,\varphi(V(G)))\mid v\in V(H)\}\leq c$.
We then say that $G$ is \emph{quasi-isometric} to~$H$.

A finite or infinite path $P$ is \emph{geodesic} if $d_P(x,y)=d_G(x,y)$ for all vertices $x,y$ on~$P$.
It is a \emph{$(\gamma,c)$-quasi-geodesic} if it is the $(\gamma,c)$-\qi\ image of a subpath of a geodesic double ray.

\subsection{Hyperbolic graphs}\label{sec_hyp}

In this section, we will give the definitions and state the lemmas regarding hyperbolic graphs that we need for our results.
For a detailed introduction to hyperbolic graphs, we refer to~\cite{CoornDelPapa,GhHaSur,gromov}.

Let $G$ be a graph and $\delta\geq 0$.
If for all $x,y,z\in V(G)$ and all shortest paths $P_1,P_2,P_3$, one between every two of those vertices, every vertex of~$P_1$ has distance at most $\delta$ to some vertex on either $P_2$ or~$P_3$ then $G$ is \emph{$\delta$-hyperbolic}.
We call $G$ \emph{hyperbolic} if it is $\delta$-hyperbolic for some $\delta\geq 0$.

Two geodesic rays in a hyperbolic graph $G$ are \emph{equivalent} if there is some $M\in\nat$ such that on each ray there are infinitely many vertices of distance at most~$M$ to the other ray.
This is an equivalence relation for hyperbolic graphs whose equivalence classes are the \emph{hyperbolic boundary points} of~$G$.
By $\rand G$ we denote the \emph{hyperbolic boundary} of~$G$, i.\,e.\ the set of hyperbolic boundary points of~$G$, and we set $\widehat{G}:=G\cup\rand G$.

For locally finite hyperbolic graphs, it is possible to equip $\widehat{G}$ with a topology such that $\widehat{G}$ is compact and every geodesic ray converges to the hyperbolic boundary point it is contained in, see~\cite[Proposition 7.2.9]{GhHaSur}.
For us, it suffices to define convergence of vertex sequences to hyperbolic boundary points.
Let $o\in V(G)$.
Let $(x_i)_{i\in\nat}$ be a sequence in $V(G)$.
It \emph{converges} to $\eta\in\rand G$ if for some geodesic ray $y_1y_2\ldots$ and some sequence $(n_i)_{i\in\nat}$ that goes to $\infty$ any geodesic path from $x_i$ to~$y_i$ has distance at least $n_i$ to~$o$.

Since the rays in a tree are always geodesic and two rays that are equivalent regarding the definition of ends eventually coincide, these rays are also equivalent with respect to the definition of the hyperbolic boundary.
Thus, there is a canonical one-to-one correspondence between the ends of trees and their hyperbolic boundary.
The following lemma follows easily.

\begin{lem}\label{lem_treeBound}
The hyperbolic boundaries of locally finite trees are totally disconnected sets.\qed
\end{lem}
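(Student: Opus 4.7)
My plan is to use the canonical identification between the hyperbolic boundary of a locally finite tree $T$ and its end space, which is already noted in the paragraph immediately preceding the lemma. Thus it suffices to exhibit, for every pair of distinct boundary points, a clopen subset of $\rand T$ containing one but not the other.

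I fix a root $o\in V(T)$. For every vertex $v\in V(T)$, let $U_v\sub\rand T$ denote the set of boundary points $\eta$ such that the unique geodesic ray from $o$ representing $\eta$ passes through~$v$. Since $T$ is locally finite, the subtree $T_v$ hanging off $v$ (away from $o$) is a locally finite subtree whose ends correspond exactly to $U_v$. I claim each $U_v$ is clopen in $\rand T$. Openness: if $\eta\in U_v$ with representing ray $y_1y_2\ldots$ through $v$, and $(x_i)\to\eta'$ for some $\eta'\ne\eta$, then using the convergence definition from the excerpt and the fact that in a tree every geodesic between $x_i$ and $y_i$ is unique, for $\eta'$ sufficiently close to $\eta$ the rays representing them must share at least the initial segment from $o$ to $v$, forcing $\eta'\in U_v$. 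Closedness: the complement of $U_v$ is $\bigcup_{w}U_w$, where $w$ ranges over the (at most finitely many, by local finiteness) neighbours of vertices on the $o$--$v$-path that do not themselves lie on that path; by the same argument each $U_w$ is open.

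To finish, given two distinct boundary points $\eta_1\ne\eta_2$, their representing rays from $o$ must diverge at some vertex (otherwise they would coincide and hence be equivalent, as noted just before the lemma). Choosing $v$ to be any vertex on the ray for $\eta_1$ strictly beyond the divergence point yields $\eta_1\in U_v$ and $\eta_2\notin U_v$, so $U_v$ is a clopen set separating them. Hence $\rand T$ is totally disconnected.

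The only mildly delicate point is matching the paper's sequence-based notion of convergence to the naive end-space topology; but thanks to uniqueness of geodesics in $T$ and local finiteness, the translation is essentially automatic, so I do not anticipate a genuine obstacle.
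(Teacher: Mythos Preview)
Your proof is correct and follows the same approach the paper indicates: the paper gives no explicit argument beyond the remark preceding the lemma that the hyperbolic boundary of a tree coincides canonically with its end space, after which it declares the lemma ``follows easily'' and places the \qedsymbol. You simply supply the omitted verification that the end space of a locally finite tree is totally disconnected via the standard clopen basis $\{U_v\}$, which is precisely the routine detail the paper leaves to the reader.
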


By its definition, the hyperbolic boundary is a refinement of the end space.
But even more can be said about this relation, cf.\ e.\,g.\ \cite[Section 7]{KB-BoundaryHypGroup}:

\begin{lem}\label{lem_ConCompBoundAreEnds}
The connected components of the hyperbolic boundary of every locally finite hyperbolic graph correspond canonically to the ends of that graph.\qed
\end{lem}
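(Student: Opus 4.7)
The plan is to build a canonical surjection $\pi\colon\rand G\to\mathrm{Ends}(G)$, show that it separates ends into distinct connected components of $\rand G$, and then show that each of its fibres is itself connected. For the map, given $\eta\in\rand G$ pick any representative geodesic ray $R$. For every finite $S\sub V(G)$ the ray $R$ eventually avoids $S$, and if $R'$ is another representative then $R$ and $R'$ contain vertices at bounded pairwise distance infinitely often, so their tails lie in a common component of $G-S$. Hence $R$ determines an end $\pi(\eta)$ independent of the choice of representative. Surjectivity is standard: for any end $\omega$ and basepoint $o$, the geodesics from $o$ to a sequence of vertices escaping into $\omega$ subconverge, by local finiteness, to a geodesic ray representing some $\eta\in\rand G$ with $\pi(\eta)=\omega$.

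For distinct ends $\omega\neq\omega'$, pick a finite $S\sub V(G)$ separating them. Since $G$ is locally finite, $G-S$ has only finitely many components $C_1,\dots,C_k$, and we set $B_i:=\{\eta\in\rand G\mid \pi(\eta)\text{ has tails in }C_i\}$. The $B_i$ partition $\rand G$, and each is closed in $\rand G$: if $\eta_n\to\eta$ with $\eta_n\in B_i$, then the definition of convergence to a boundary point combined with the fellow-travelling property of geodesics in $\delta$-hyperbolic graphs forces any geodesic representative of $\eta$ to eventually lie in $C_i$ as well. Hence each $B_i$ is clopen, and $\pi\inv(\omega)$ and $\pi\inv(\omega')$ sit in different clopen pieces of $\rand G$, a fortiori in different connected components.

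The main obstacle is showing that each fibre $\pi\inv(\omega)$ is itself connected. Writing $C_S(\omega)$ for the component of $G-S$ containing tails of rays in $\omega$, one has $\pi\inv(\omega)=\bigcap_S B_{C_S(\omega)}$, an intersection of a downward-directed family of compact clopen subsets of $\rand G$. By the classical fact that the intersection of a decreasing family of compact connected subsets of a Hausdorff space is connected, it suffices to verify that each $B_{C_S(\omega)}$ is connected. Here I would follow the visual-metric approach used in \cite[Section 7]{KB-BoundaryHypGroup}: for any two points $\eta,\eta'\in B_{C_S(\omega)}$ and any $\varepsilon>0$, construct an $\varepsilon$-chain inside $B_{C_S(\omega)}$ joining them, using that two geodesic rays whose tails both lie in a common infinite component of $G-S$ can be interpolated by geodesic rays that fellow-travel the previous one for longer and longer initial segments before branching off. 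The delicate point is to verify that these arguments, originally formulated in the hyperbolic group setting, rely only on local finiteness and hyperbolicity rather than on cocompactness of any group action, and this is where I would expect the longest technical step.
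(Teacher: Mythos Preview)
The paper does not give its own proof of this lemma: it is stated with a \qed and a pointer to \cite[Section~7]{KB-BoundaryHypGroup}. So there is nothing to compare your argument against on the paper's side beyond that citation.

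That said, your proposal has a genuine gap in the connectedness step. You reduce the connectedness of the fibre $\pi^{-1}(\omega)$ to the connectedness of each set $B_{C_S(\omega)}$, and then propose to establish the latter via $\varepsilon$-chains. But $B_{C_S(\omega)}$ is \emph{not} connected in general. Take $G$ to be the $3$-regular tree and $S$ any finite vertex set. Then $C_S(\omega)$ is an infinite subtree containing infinitely many ends, and $B_{C_S(\omega)}$ is a clopen subset of the Cantor-type boundary $\rand G$ containing more than one point; such a set is totally disconnected, not connected. Your proposed interpolation (``geodesic rays that fellow-travel the previous one for longer and longer initial segments before branching off'') produces no continuous family in a tree: any two rays to distinct ends share only a finite initial segment, and the intermediate rays you construct land on a discrete set of boundary points. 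So the nested-intersection argument cannot be fed the hypothesis it needs.

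The parts of your outline that \emph{do} work are the construction and well-definedness of $\pi$, its surjectivity, and the separation of distinct fibres by clopen sets. What is missing is a correct argument that a single fibre $\pi^{-1}(\omega)$ is connected. One cannot deduce this from connectedness of the larger sets $B_{C_S(\omega)}$; rather, one must argue directly about $\pi^{-1}(\omega)$. The standard route (as in the reference the paper cites) is essentially the following dichotomy: either $\pi^{-1}(\omega)$ is a single point, or the end $\omega$ is ``thick'' in the sense that the components $C_S(\omega)$ have arbitrarily large separators from the rest of the graph, and in that case one shows connectedness of $\pi^{-1}(\omega)$ using the geometry of bi-infinite geodesics joining pairs of points of $\pi^{-1}(\omega)$ and the fact that any clopen partition of $\pi^{-1}(\omega)$ would force such geodesics to pass through a bounded region, contradicting thickness. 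If you want to rescue the $\varepsilon$-chain idea, it has to be run on $\pi^{-1}(\omega)$ itself, not on $B_{C_S(\omega)}$, and it genuinely uses that all rays involved lie in the \emph{same} end rather than merely in the same component of $G-S$.
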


We are interested in homeomorphism types of hyperbolic boundaries.
That is why the following result is important for us.

\begin{lem}\label{lem_QIinduceHomeo}\cite[III.H Theorem 3.9]{BridsonHaefliger}
Quasi-isometries between locally finite hyperbolic graphs induce canonical homeomorphisms between their boundaries.\qed
\end{lem}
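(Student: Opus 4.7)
The plan is to construct the induced boundary map $\hat\varphi\colon\rand G\to\rand H$ from a $(\gamma,c)$-\qiy\ $\varphi\colon V(G)\to V(H)$ using Morse stability of quasi-geodesics in $\delta$-hyperbolic graphs. Given a geodesic ray $R=v_0v_1\ldots$ in $G$, the sequence $\varphi(v_0),\varphi(v_1),\ldots$ together with geodesic segments joining successive images forms a $(\gamma',c')$-quasi-geodesic ray $\varphi_*(R)$ in $H$, with constants depending only on $\gamma,c$. By the Morse lemma there exist a constant $D=D(\gamma',c',\delta)$ and a geodesic ray $R'$ in $H$ of Hausdorff distance at most $D$ from $\varphi_*(R)$. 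Set $\hat\varphi([R]):=[R']$.

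Well-definedness reduces to two fellow-traveller arguments. First, any two geodesic rays $D$-close to $\varphi_*(R)$ stay within distance $2D$ of each other and hence are equivalent. Second, if $R_1$ and $R_2$ are equivalent rays in $G$, then $\varphi_*(R_1)$ and $\varphi_*(R_2)$ remain at bounded Hausdorff distance, so the associated rays $R_1',R_2'$ do as well. For bijectivity I apply the same construction to a quasi-inverse $\psi\colon V(H)\to V(G)$ of $\varphi$: since $\psi\circ\varphi$ stays within bounded distance of $\mathrm{id}_G$, an easy fellow-traveller estimate yields $\hat\psi\circ\hat\varphi=\mathrm{id}_{\rand G}$, and symmetrically $\hat\varphi\circ\hat\psi=\mathrm{id}_{\rand H}$.

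For continuity I use the convergence notion from Section~\ref{sec_hyp}. Fix a basepoint $o\in V(G)$ and let $(x_i)_{i\in\nat}$ converge to $[R]\in\rand G$, witnessed by vertices $y_i\in R$ and a sequence $n_i\to\infty$. Choose $y_i'\in R'$ within distance $D$ of $\varphi(y_i)$; then $(y_i')$ exhausts $R'$. Any geodesic $Q_i$ in $G$ from $x_i$ to $y_i$ has distance at least $n_i$ from $o$, so its image $\varphi(Q_i)$ is a quasi-geodesic in $H$ of distance at least $\gamma\inv n_i-c$ from $\varphi(o)$. By Morse stability together with $d_H(\varphi(y_i),y_i')\leq D$, any geodesic from $\varphi(x_i)$ to $y_i'$ then has distance at least $\gamma\inv n_i-c-2D-2\delta$ from $\varphi(o)$, which tends to infinity. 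Hence $(\varphi(x_i))$ converges to $\hat\varphi([R])$. The same argument applied to $\psi$, together with compactness of $\widehat G$, upgrades the continuous bijection to a homeomorphism.

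The only substantive obstacle is the Morse stability lemma for quasi-geodesics, a classical fact about $\delta$-hyperbolic spaces; once it is in hand, canonicity of $\hat\varphi$ (independence from the choices of $R'$ and of the quasi-inverse) and continuity follow from routine fellow-traveller bookkeeping.
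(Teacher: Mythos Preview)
The paper does not prove this lemma at all: it is stated with a citation to \cite[III.H Theorem 3.9]{BridsonHaefliger} and closed with a \qed, so there is no proof to compare against. Your sketch is the standard argument (Morse stability of quasi-geodesics, fellow-traveller estimates, quasi-inverse for bijectivity), and it is essentially what one finds in Bridson--Haefliger or any of the other references \cite{CoornDelPapa,GhHaSur} the paper cites for hyperbolic background.

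One small remark on your continuity step: you verify that vertex sequences $(x_i)$ converging to a boundary point $\eta$ are mapped to sequences converging to $\hat\varphi(\eta)$, but continuity of $\hat\varphi\colon\rand G\to\rand H$ strictly requires the same for sequences of \emph{boundary points}. This is easily patched---either run the same estimate for sequences of geodesic rays, or invoke compactness of $\widehat G$ together with density of $V(G)$---but as written it is not quite complete. Also, your final sentence slightly conflates two routes: once you know both $\hat\varphi$ and $\hat\psi$ are continuous and mutually inverse, you are done without any appeal to compactness; alternatively, a continuous bijection from compact to Hausdorff is automatically a homeomorphism. Either works, but you only need one.
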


The following lemma is a direct consequence of Woess~\cite[Corollary 5]{W-FixedSets}.

\begin{lem}\label{lem_OneEndBoundInfinite}
The hyperbolic boundary of every one-ended \qt\ locally finite hyperbolic graph is infinite.\qed
\end{lem}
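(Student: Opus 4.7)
The plan is to argue by contradiction. Suppose $\partial G$ is finite. The automorphism group $\Aut(G)$ acts on $V(G)$ with only finitely many orbits, and by Lemma~\ref{lem_QIinduceHomeo} each automorphism (being an isometry, a fortiori a \qiy) induces a self-homeomorphism of $\partial G$. Since $\partial G$ is a finite set, the homomorphism $\Aut(G)\to\mathrm{Sym}(\partial G)$ has finite-index kernel $\Gamma$, and $\Gamma$ still acts on $G$ with only finitely many orbits; in particular, $\Gamma$ acts \qt ly on $G$ and fixes $\partial G$ pointwise.

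Next, I would invoke Woess's \cite[Corollary 5]{W-FixedSets}. In the formulation needed here, this result says that a group acting \qt ly on a locally finite hyperbolic graph cannot fix too large a portion of the hyperbolic boundary: such a fixed boundary set has cardinality at most two, and if it has cardinality exactly two, the graph is two-ended and \qi\ to a line, while a single fixed boundary point forces the orbits of the action to have unbounded ``drift'' towards that point, in particular forcing the graph to have more than one end. Applying this with the fixed set equal to all of~$\partial G$ yields $|\partial G|\leq 2$.

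Finally, I would eliminate the remaining small cases using the one-endedness of~$G$. The case $\partial G=\emptyset$ is excluded since an infinite locally finite hyperbolic graph, being quasi-transitive and hence containing an infinite geodesic ray by K\"onig's lemma, always has a non-empty hyperbolic boundary. The case $|\partial G|=2$ gives $G$ \qi\ to $\mathbb{Z}$, hence two-ended, contradicting one-endedness. The case $|\partial G|=1$ is ruled out by the second half of Woess's corollary as described above. Each case contradicts the assumption that $G$ is one-ended and infinite, so $\partial G$ must be infinite.

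The main obstacle is not a deep one: it is the bookkeeping of translating Woess's formulation (phrased for general group actions on graphs with possibly several ends) into our setup, and verifying that passing from $\Aut(G)$ to the kernel subgroup $\Gamma$ preserves \qt ivity. Both steps are essentially routine, which is why the present result is recorded as a ``direct consequence''.
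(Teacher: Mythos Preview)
The paper gives no proof beyond the bare citation to Woess~\cite[Corollary~5]{W-FixedSets}, so your sketch is necessarily more detailed than anything in the text. Your overall strategy---pass to a finite-index subgroup $\Gamma\leq\Aut(G)$ fixing $\partial G$ pointwise and then invoke a classification of isometric group actions on hyperbolic graphs---is a reasonable reading of ``direct consequence'', and the reduction to $\Gamma$ is handled correctly.

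There is, however, a genuine gap in your treatment of the case $|\partial G|=1$. You assert that a single fixed boundary point forces the graph to have more than one end, but this is backwards: by Lemma~\ref{lem_ConCompBoundAreEnds} the connected components of $\partial G$ are in bijection with the ends of~$G$, so $|\partial G|=1$ would yield exactly \emph{one} end---consistent with the hypothesis, not contradicting it. The phrase ``unbounded drift towards that point'' does not produce a second end; a horoball-like geometry has precisely this behaviour and is one-ended. What is actually needed to dispose of this case is the non-trivial content of Woess's classification: a group acting \qt ly on an infinite locally finite hyperbolic graph cannot be purely parabolic, because such an action always contains a hyperbolic isometry, and the attracting and repelling fixed points of that isometry are two distinct elements of~$\partial G$. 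This immediately rules out $|\partial G|=1$, and then your eliminations of $|\partial G|=0$ and $|\partial G|=2$ go through as written.
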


\subsection{Tree-amalgamations}\label{sec_ta}

In this section, we state the definition of \ta s and cite several results about them.

Let $p_1,p_2\in\nat\cup\{\infty\}$.
A tree is \emph{semiregular} or \emph{$(p_1,p_2)$-semiregular} if all vertices in~$V_1$ have the same degree~$p_1$ and all vertices in~$V_2$ have the same degree~$p_2$, where $V_1,V_2$ is the canonical bipartition of its vertex set.

Let $G_1$ and $G_2$ be two graphs and let $T$ be a $(p_1,p_2)$-semiregular tree with canonical bipartition $V_1,V_2$ of its vertex set.
Let $\{S^i_k\mid 0\leq k< p_i\}$ be a set of subsets of~$V(G_i)$ such that all $S^i_k$ have the same cardinality and let $\varphi_{k,\ell}\colon S_k^1\to S_\ell^2$ be a bijection.
Let
\[c\colon E(T)\to\{(k,\ell)\mid 0\leq k< p_1,0\leq\ell< p_2 \}\]
such that for all $v\in V_i$ the $i$-th coordinates of the elements of $\{c(e)\mid v\in e\}$ exhaust the set $\{k\mid 0\leq k< p_i\}$.

For every $v\in V_i$ with $i=1,2$, let $G_i^v$ be a copy of~$G_i$ and let $S_k^v$ be the copy of $S^i_k$ in $G_i^v$.
Let $H:=G_1+G_2$ be the graph obtained from the disjoint union of all graphs $G_i^v$ by adding an edge between all $x\in S_k^v$ and $\varphi_{k,\ell}(x)\in S_\ell^u$ for every edge $vu\in E(T)$ with $v\in V_1$ and $c(vu)=(k,\ell)$.
Let $G$ be the graph obtained from $H$ by contracting all new edges $x\varphi_{k,\ell}(x)$, i.\,e.\ all edges outside of the graphs $G_i^v$.
We call $G$ the \emph{\ta} of $G_1$ and $G_2$ over~$T$ (with respect to the sets $S_k$ and the maps $\varphi_{k,\ell}$) and we denote it by $G_1\ast_T G_2$.
If the \emph{amalgamation tree} $T$ is clear from the context, we simply write $G_1\ast G_2$.
The sets $S_k$ and their copies in~$G$ are the \emph{adhesion sets} of the tree amalgamation.
If the supremum of the diameter of all adhesion sets of a tree amalgamation is finite, then this tree amalgamation has \emph{bounded adhesion}.
Let $\psi\colon V(H)\to V(G)$ be such that every $x\in V(H)$ is mapped to the vertex of~$G$ it ends up after all contractions.
A \ta\ $G_1\ast G_2$ is \emph{trivial} if there is some $G_i^v$ such that the restriction of~$\psi$ to~$G^i_v$ is a bijection $G_i^v\to G_1\ast G_2$.
So a \ta\ of finite adhesion is trivial if $V(G_i)$ is the only adhesion set of~$G_i$ and $p_i=1$ for some $i\in\{1,2\}$.

The \emph{identification size} of a vertex $x\in V(G)$ is the smallest size of subtrees $T'$ of~$T$ such that $x$ is obtained by contracting only edges between vertices in $\bigcup_{u\in V(T')}V(G_j^u)$.
The \ta\ has \emph{finite identification} if the supremum of all identification sizes is finite.

In \cite[Lemma 7.7]{HLMR}, it was shown that for a \ta\ $G\ast H$ of bounded adhesion if $R$ is a geodesic ray in~$G$, then its image in some $G^u$ in $G\ast H$ is a quasi-geodesic ray.
We state a strengthened version of that result for the case that the \ta\ is of adhesion~$1$.
The proof of Lemma~\ref{lem_HMLR7.7+} is contained in the proof of \cite[Lemma 7.7]{HLMR}.

\begin{lem}\label{lem_HMLR7.7+}
	Let $G,H$ be connected locally finite graphs and $G\ast H$ a \ta\ of adhesion~$1$. Then every geodesic in~$G$ is a geodesic in $G\ast H$.\qed
\end{lem}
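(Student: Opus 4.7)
My strategy is to exploit adhesion~$1$ to obtain a cut-vertex structure on $G\ast H$ and then shortcut any walk between two vertices of a single factor copy.

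I fix an arbitrary copy $G^u$ of $G$ inside $G\ast H$ with $u\in V_1$. Because the adhesion equals $1$, for each edge $uu'\in E(T)$ the intersection $V(G^u)\cap V(G^{u'})$ is a single vertex $s_{uu'}$, namely the vertex obtained by contracting the unique new edge of colour $c(uu')$; moreover, every edge of $G\ast H$ lies inside a single factor copy, since $G\ast H$ is obtained from the disjoint union of the factor copies only by contracting the new edges that run between them. Using that $T$ is a tree, I would then check that $s_{uu'}$ is a cut vertex of $G\ast H$: any walk from $G^u\sm\{s_{uu'}\}$ to a factor $G^{v}$ indexed by a vertex $v$ in the component of $T\sm\{u\}$ containing $u'$ must pass through $s_{uu'}$, because each step stays inside one factor copy and the tree structure of $T$ forces the sequence of factor indices visited to cross through~$u'$ from~$u$.

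Granted this block-tree structure, let $P$ be a geodesic in $G$ from $a$ to $b$, viewed inside $G^u$ so that $|P|=d_G(a,b)$, and let $W$ be an arbitrary walk from $a$ to $b$ in $G\ast H$. I decompose $W$ into maximal subwalks contained in $G^u$ separated by \emph{excursions} into the complement. By the cut-vertex property, each excursion lives entirely in the subgraph induced by one component of $T\sm\{u\}$ and therefore leaves and re-enters $G^u$ through the same cut vertex $s_{uu'}$. Replacing every excursion by the trivial walk at that vertex yields a walk $W'$ in $G^u$ with $|W'|\leq|W|$, so
\[
d_{G\ast H}(a,b)\;\geq\; d_{G^u}(a,b)\;=\;d_G(a,b);
\]
the reverse inequality is immediate, hence $P$ is geodesic in $G\ast H$ as well.

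The only delicate step is the block-tree claim of the first paragraph, which uses adhesion~$1$ crucially: in the bounded-adhesion case the separating sets have positive diameter, so the shortcut argument can replace an excursion by a path in $G^u$ of length bounded by that diameter rather than by $0$, which is precisely why \cite[Lemma 7.7]{HLMR} gives only quasi-geodesics. Once the cut-vertex structure has been pinned down from the tree amalgamation construction, the excursion-shortening argument above is entirely elementary and yields the sharp geodesic conclusion.
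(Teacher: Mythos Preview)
Your argument is correct and is exactly the adhesion-$1$ specialisation of the excursion argument that the paper defers to in \cite[Lemma 7.7]{HLMR}: the cut-vertex structure coming from adhesion~$1$ lets every excursion out of~$G^u$ be collapsed to a single vertex rather than to a path of bounded length, upgrading quasi-geodesic to geodesic. The paper gives no independent proof, so there is nothing further to compare.
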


All following results in this section deal with the interplay of \ta s with \qis\ and are proved in~\cite{H-QuasiIsometriesAndTA}.

\begin{lem}\label{lem_astQI+}\cite[Remark 2.1]{H-QuasiIsometriesAndTA}
Let $G$ and~$H$ be locally finite graphs and $G\ast H$ be a \ta\ of finite identification and bounded adhesion.
Then $G\ast H$ is \qi\ to $G+ H$.\qed	
\end{lem}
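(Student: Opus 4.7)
The plan is to show that the natural contraction map $\psi\colon V(G+H)\to V(G\ast H)$ from the definition of tree amalgamation is itself a quasi-isometry. It is surjective by construction, so the task is to establish two-sided control of distances under $\psi$.

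The upper bound $d_{G\ast H}(\psi(u),\psi(v))\leq d_{G+H}(u,v)$ is immediate, since $G\ast H$ arises from $G+H$ by contracting amalgamation edges, which can only decrease distances; any $u$--$v$ path in $G+H$ projects to a walk of no greater length in $G\ast H$.

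For the matching lower bound, the strategy is to lift a geodesic from $G\ast H$ back to $G+H$. The key structural ingredient is a fiber bound coming from the finite identification hypothesis: there is a constant $N$ such that each fiber $\psi\inv(a)$ has at most $N$ vertices and lies in the union of copies $G_j^u$ indexed by some subtree $T'\sub T$ with $|V(T')|\leq N$. Since these vertices form a single contraction class of amalgamation edges, they are pairwise linked within $G+H$ by a tree of amalgamation edges in $T'$, whence $\diam_{G+H}(\psi\inv(a))\leq N-1$. Every edge $aa'$ of $G\ast H$ lifts to a non-contracted edge $xy$ inside some $G_j^u$ with $\psi(x)=a$ and $\psi(y)=a'$. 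Lifting a geodesic $a_0 a_1\ldots a_k$ from $\psi(u)$ to $\psi(v)$ in $G\ast H$ inductively---alternating between such lifted edges and detours of length at most $N-1$ inside consecutive fibers---produces a $u$--$v$ walk in $G+H$ of length at most $kN$, yielding
\[
d_{G+H}(u,v)\leq N\cdot d_{G\ast H}(\psi(u),\psi(v)).
\]

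The main obstacle is a careful verification of the fiber bound: one must argue that the preimages $\psi\inv(a)$ are exactly the contraction classes of the amalgamation edges in $G+H$ and, using the tree structure of $T$ together with finite identification, that each such class spans a subtree on at most $N$ vertices. The bounded adhesion hypothesis ensures that the amalgamation framework is metrically well-behaved but does not enter the distance comparison above directly, which is driven entirely by the identification bound.
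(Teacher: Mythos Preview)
The paper does not actually prove this lemma: it is stated with a \qed\ and attributed to \cite[Remark 2.1]{H-QuasiIsometriesAndTA}. So there is no in-paper argument to compare against, and your task reduces to whether the proposed proof stands on its own.

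It does. The contraction map $\psi$ is surjective and $1$-Lipschitz, and the key point---that each fiber $\psi^{-1}(a)$ is a tree on at most $N$ vertices, where $N$ bounds the identification size---is exactly right. The reason a fiber cannot contain two distinct vertices from the same copy $G_j^u$ is that the amalgamation edges between any two adjacent copies form a matching (via the bijections $\varphi_{k,\ell}$), so a walk in the amalgamation-edge graph that projects to a backtracking walk in the tree $T$ must itself backtrack; hence each fiber is isomorphic to its shadow subtree in~$T$ and has diameter at most $N-1$. Your lifting argument then goes through.

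One small arithmetic correction: the lifted walk has length at most $k + (k{+}1)(N{-}1) = kN + (N{-}1)$, not $kN$, since you also need detours of length at most $N{-}1$ inside the initial fiber (from $u$ to the first lifted edge) and the terminal fiber (from the last lifted edge to $v$). This does not affect the conclusion, as it still yields
\[
d_{G+H}(u,v)\leq N\cdot d_{G\ast H}(\psi(u),\psi(v)) + (N-1),
\]
which is all that is needed for a quasi-isometry. Your closing remark is also accurate: bounded adhesion is not used in the distance estimate itself; the argument is driven entirely by the finite identification bound.
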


The following lemma enables us to change the factors in a \ta\ a bit while staying \qi\ to the original \ta\ but in the result we have more control over the adhesion sets and identification sizes.

\begin{lem}\label{lem_QIandTA3.1}\cite[Lemma 3.1]{H-QuasiIsometriesAndTA}
Let $G$ be a locally finite connected \qt\ graph and let $(G_1,G_2)$ be a factorisation of~$G$.
Then there is a locally finite connected \qt\ graph $H$ that has a factorisation $(H_1,H_2)$ such that the following hold.
\begin{enumerate}[{\rm (1)}]
\item\label{itm_AdhesionG} $G$ is \qi\ to~$H$;
\item\label{itm_AdhesionGi} $G_i$ is \qi\ to~$H_i$ for $i=1,2$;
\item\label{itm_AdhesionOne} $H_1\ast H_2$ has adhesion~$1$;
\item\label{itm_AdhesionDisjoint} all adhesion sets of $H_1\ast H_2$ are distinct;
\item\label{itm_AdhesionsCover} the adhesion sets of~$H_i$ cover $H_i$ for $i=1,2$.\qed
\end{enumerate}
\end{lem}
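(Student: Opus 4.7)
The plan is to build $H$ from $G$ by an explicit modification of each factor that collapses every multi-vertex adhesion set to a single point, and then to verify the required \qis.

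First I would address conditions~(1)--(3) by the following construction. For each $i \in \{1,2\}$ and each index $k$ with $0 \leq k < p_i$, introduce an auxiliary ``hub'' vertex $x^i_k$ and join it by an edge to every vertex of the original adhesion set $S^i_k \subseteq V(G_i)$; call the resulting graph $H_i$. I then take the singletons $\{x^i_k\}$ as the new adhesion sets of $H_i$ and retain the amalgamation tree $T$ together with the colouring $c$, so that $\varphi_{k,\ell}$ now sends $x^1_k$ to $x^2_\ell$. Since $G_1 \ast G_2$ has bounded adhesion (a consequence of the \qt\ hypothesis), each $S^i_k$ has uniformly bounded diameter in $G_i$, so the inclusion $G_i \hookrightarrow H_i$ is a \qiy, yielding~(2). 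The new \ta\ has adhesion~$1$ by construction, yielding~(3). For~(1), contracting each hub copy $x^i_k$ inside $H_1 \ast H_2$ back onto the attached $S^i_k$ undoes the modification up to a bounded blowup and produces a graph \qi\ to $G$ by the same principle underlying Lemma~\ref{lem_astQI+}.

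Condition~(4) is then automatic, because I index the hubs by $k$ rather than by the underlying subset of $V(G_i)$: even if $S^i_k = S^i_{k'}$ for some $k \neq k'$, the hubs $x^i_k$ and $x^i_{k'}$ are distinct new vertices, so the corresponding singleton adhesion sets differ.

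The principal obstacle is~(5): after the construction above, the singleton adhesion sets of $H_i$ cover only the hub vertices, not those inherited from $G_i$. To fix this I would enlarge $H_i$ a second time by attaching, equivariantly on each $\Aut(G_i)$-orbit of vertices not already covered, a matched family of pendants to be declared as further singleton adhesion sets, and correspondingly refine the amalgamation tree $T$ to a semiregular tree $T'$ that accommodates the new adhesion indices on both factors. Since $G_i$ is \qt, there are only finitely many such orbits, so the refinement can be arranged to preserve local finiteness and quasi-transitivity of the final graph $H$; each new pendant lies at distance~$1$ from a pre-existing vertex, so this second enlargement remains a \qiy. The main care is to carry out the augmentation compatibly on the two factors so that the matching maps $\varphi$ on the new singletons are well-defined bijections and so that conditions~(2)--(4) are not destroyed in the process; the finiteness of $\Aut(G)$-orbits on the adhesion sets is the essential input that makes this bookkeeping terminate.
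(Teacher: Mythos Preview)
The paper does not give its own proof of this lemma: it is quoted from \cite[Lemma~3.1]{H-QuasiIsometriesAndTA} and closed with a \qed, so there is nothing in the present paper to compare your argument against.

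On the substance of your sketch, two points need attention. First, your step for~(\ref{itm_AdhesionsCover}) does not do what you claim: if you attach a pendant $p_v$ to an uncovered vertex $v$ and declare $\{p_v\}$ to be a new singleton adhesion set, then $p_v$ is covered but $v$ itself still is not, so~(\ref{itm_AdhesionsCover}) continues to fail. What is actually needed is to declare the original vertices themselves as additional singleton adhesion sets (enlarging the index set and hence the amalgamation tree) and then to argue that the resulting, more highly branched tree amalgamation is still \qi\ to the original one; that last step is where the real content of~(\ref{itm_AdhesionsCover}) lies, and pendants do not address it. Second, local finiteness of your $H_i$ is not automatic: a vertex of $G_i$ lying in infinitely many of the sets $S^i_k$ would acquire infinitely many hub neighbours. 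You would have to argue, or import as a hypothesis on the factorisation, that this cannot occur; relatedly, your parenthetical that bounded adhesion is ``a consequence of the \qt\ hypothesis'' is not justified --- in every application within this paper bounded adhesion is assumed explicitly.
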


\begin{lem}\label{lem_PartsFinite}\cite[Lemma 2.9]{H-QuasiIsometriesAndTA}
A connected locally finite \qt\ graph that has a terminal factorisation of only finite graphs is \qi\ to a $3$-regular tree.\qed
\end{lem}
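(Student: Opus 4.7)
The plan is to induct on the number $n$ of factors in the terminal factorisation, using Lemmas~\ref{lem_QIandTA3.1} and~\ref{lem_astQI+} at each step to reduce a tree amalgamation to its underlying amalgamation tree.

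For the base case $n=2$, write $G=G_1\ast_T G_2$ with both $G_i$ finite. Apply Lemma~\ref{lem_QIandTA3.1} to replace $G$ by a quasi-isometric graph $H=H_1\ast H_2$ of adhesion~$1$; each $H_i$, being a locally finite graph quasi-isometric to the finite $G_i$, is also finite. Finiteness of the factors bounds the identification sizes, so Lemma~\ref{lem_astQI+} yields that $H$ is quasi-isometric to $H_1+H_2$, which consists of a copy of $H_i$ at each vertex of~$T$ joined by a single edge along each edge of~$T$. Collapsing each factor copy to its associated tree vertex is then a quasi-isometry from $H_1+H_2$ onto~$T$. Since $G$ is infinite and quasi-transitive with all factors finite, $T$ is a locally finite semiregular tree with infinitely many ends, and so is quasi-isometric to the $3$-regular tree.

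For the inductive step $n>2$, some bracketing of the iterated amalgamation presents $G$ as $A\ast_{T'}B$ with $A,B$ inheriting strictly shorter terminal factorisations into finite graphs. By the inductive hypothesis, each of $A,B$ is either finite or quasi-isometric to the $3$-regular tree. Applying Lemmas~\ref{lem_QIandTA3.1} and~\ref{lem_astQI+} once more to the two-factor factorisation $(A,B)$ of $G$, we may replace $G$ by a quasi-isometric graph obtained by placing copies of (adhesion-$1$ modifications of) $A$ and $B$ at the vertices of $T'$ and joining them by single edges along the edges of~$T'$. Replacing each copy in turn by its quasi-isometric tree (a point, if the copy is finite) turns the composite into a locally finite tree with infinitely many ends, and hence into something quasi-isometric to the $3$-regular tree.

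The trickiest step is the inductive one: one must verify that carrying out the quasi-isometric replacement of each $A$- and $B$-copy simultaneously inside the composite graph yields a graph quasi-isometric to the original composite. This requires uniform quasi-isometry constants across all copies, which is available by quasi-transitivity of $G$, together with the crucial fact that after the reduction by Lemma~\ref{lem_QIandTA3.1} the copies are attached at single vertices, so the replacements inside different copies do not interact and the bounded identification sizes keep the tree-like combinatorics intact.
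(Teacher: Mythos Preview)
The paper does not prove this lemma at all: it is quoted verbatim from \cite[Lemma~2.9]{H-QuasiIsometriesAndTA} and closed with a \qed. So there is no ``paper's own proof'' to compare against, and any self-contained argument you supply goes beyond what the present paper does.

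That said, your argument has a genuine gap in the inductive step. To apply the induction hypothesis to $A$ and~$B$ you need each of them to be a connected locally finite \emph{quasi-transitive} graph admitting a terminal factorisation into finite graphs. The definition of factorisation in this paper only asserts that $G$ is obtained by iterated tree amalgamations of the $G_i$; it says nothing about the intermediate graphs that arise when you bracket the iteration as $A\ast B$. In general an intermediate tree amalgamation of quasi-transitive factors need not be quasi-transitive without further hypotheses (in \cite{HLMR,H-QuasiIsometriesAndTA} one typically restricts to tree amalgamations ``respecting the actions'', which is exactly what forces the result to be quasi-transitive, but you neither state nor invoke this). Without quasi-transitivity of~$A$ you cannot feed $A$ back into the induction hypothesis, nor into Lemma~\ref{lem_QIandTA3.1}.

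A secondary point: you are using Lemma~\ref{lem_QIandTA3.1}, which is \cite[Lemma~3.1]{H-QuasiIsometriesAndTA}, to prove \cite[Lemma~2.9]{H-QuasiIsometriesAndTA}. Inside the present paper both are black boxes, so there is no formal circularity here, but be aware that in the source the numbering suggests 2.9 is available before 3.1, so the original argument presumably proceeds differently. Your base case, by contrast, is essentially fine once one notes that $G$ must be infinite (as it is wherever the lemma is applied), so that the semiregular amalgamation tree has both degrees at least~$2$ and hence infinitely many ends.
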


The following theorem plays a central role in our proofs and can be seen as an analogue of Theorem~\ref{thm_MS1.1} for \qis\ of graphs instead of homeomorphisms of hyperbolic boundaries.

\begin{thm}\label{thm_QI1.4}\cite[Theorem 1.4]{H-QuasiIsometriesAndTA}
Let $G$ and $H$ be locally finite quasi-transitive graphs with infinitely many ends and let $(G_1,\ldots,G_n),(H_1,\ldots,H_m)$ be factorisations of $G,H$, respectively.
If $(G_1,\ldots,G_n)$ and $(H_1,\ldots,H_m)$ have the same set of quasi-isometry types of infinite factors, then $G$ and $H$ are quasi-isometric.\qed
\end{thm}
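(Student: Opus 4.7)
My plan is to reduce both $G$ and $H$ to a common quasi-isometric model whose structure is dictated solely by the multiset of quasi-isometry types of infinite factors, and then build a quasi-isometry between these models. The induction variable will be the total number of binary \ta s needed to assemble $G$ (respectively $H$) from its factors. The base case is trivial: a single-factor factorisation gives $G$ itself up to quasi-isometry, and the hypothesis forces the two single factors to be quasi-isometric.

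For the inductive step, I would write $G$ as a binary \ta\ $A \ast B$, where $A$ and $B$ have factorisations with strictly fewer factors than $G$. First apply Lemma~\ref{lem_QIandTA3.1} to replace $A \ast B$ by a quasi-isometric \ta\ of adhesion~$1$ whose adhesion sets are distinct and cover both factors, and then use Lemma~\ref{lem_astQI+} to further replace it by the ``tree of copies'' graph $A + B$. This puts $G$ into a canonical form as a locally finite tree of copies of $A$ and $B$ joined by single bridge edges between adhesion vertices. By construction this intermediate form depends, up to quasi-isometry, only on the quasi-isometry types of $A$ and $B$ and on the local structure of the amalgamation tree, which is controlled by local finiteness.

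The key ingredient is the handling of finite factors. By Lemma~\ref{lem_PartsFinite}, any sub-amalgamation whose factors are all finite is quasi-isometric to a $3$-regular tree. Iteratively absorbing maximal finite sub-amalgamations into the tree skeleton, I would reduce the factorisation to a \ta\ whose factors are all infinite, with an amalgamation tree that is locally finite and of bounded degree (using local finiteness and quasi-transitivity of~$G$). In this reduced form the quasi-isometry type depends only on the multiset of quasi-isometry types of the infinite factors: given two graphs in this form with the same multiset, one matches copies of infinite factors along a tree isomorphism of the skeletons, and lifts fixed quasi-isometries between corresponding factors to obtain a global quasi-isometry via a standard tree-of-spaces patching argument.

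The main obstacle is the absorption step for finite factors. Finite factors inserted at different places of the factorisation can change the shape of the amalgamation tree significantly, and one must show that these different shapes become quasi-isometrically indistinguishable once the infinite factors have been extracted. Lemma~\ref{lem_PartsFinite} supplies the canonical $3$-regular tree model, but exploiting it requires first separating infinite factors from finite ones inside the factorisation; this separation relies on repeated applications of Lemma~\ref{lem_QIandTA3.1} to normalize intermediate tree amalgamations so that finite factors can be collected together, amalgamated into a tree, and then re-glued to the infinite factors without altering the quasi-isometry type.
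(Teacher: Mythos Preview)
This theorem is not proved in the present paper: it is quoted from \cite[Theorem~1.4]{H-QuasiIsometriesAndTA} and stated with a terminal \qedsymbol, so there is no proof here to compare your proposal against. The paper uses it purely as a black box in the proof of Theorem~\ref{thm_MS1.1}.

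That said, your sketch has a structural issue worth flagging. The lemmas you invoke as tools --- Lemmas~\ref{lem_astQI+}, \ref{lem_QIandTA3.1}, and~\ref{lem_PartsFinite} --- are themselves imported from the same source \cite{H-QuasiIsometriesAndTA}, so in that paper's internal logic they may well be ingredients of (or even consequences of) the theorem you are trying to prove; you would need to check that paper to be sure you are not arguing in a circle. Beyond that, your outline is plausible in spirit but you correctly identify the real difficulty without resolving it: the ``absorption'' of finite factors and the claim that the resulting tree-of-infinite-factors model depends only on the \emph{set} (not multiset, not tree arrangement) of quasi-isometry types is exactly the substance of the theorem, and your proposal does not supply the mechanism for why different tree arrangements of the same infinite factors yield quasi-isometric graphs. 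The ``standard tree-of-spaces patching argument'' you allude to requires a tree isomorphism respecting factor types, and establishing that such an isomorphism can always be arranged is the heart of the matter.
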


\section{Proofs of the main theorems}\label{sec_pf}

In this section we will prove our main results.
But we need some preliminary results first.

The following lemma is a special case of a result of Steiner and Steiner \cite[Theorem 4]{StSt-GraphClosures}. It is also possible adapt the proof of Martin and \'Swi\k{a}tkowski \cite[Lemma 4.2]{MS-HyperbolicGroupsWithHomeoBound} to our situation to obtain that lemma.

\begin{lem}\label{lem_MS4.2}
Let $G,H$ be locally finite \qt\ hyperbolic graphs and let $f\colon\rand G\to\rand H$ be a homeomorphism.
Then $f$ extends to a homeomorphism ${\widehat{G}\to\widehat{H}}$.\qed
\end{lem}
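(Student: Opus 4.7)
The plan is to exploit the fact that $\widehat{G}$ and $\widehat{H}$ are compactifications of the discrete vertex sets by their boundaries, and to build a bijection $\hat f\colon V(G)\to V(H)$ which, together with $f$, gives the required extension. The first observation is that since $G$ is locally finite, every vertex $v\in V(G)$ is an isolated point of $\widehat{G}$: any sequence of vertices converging to~$v$ in $\widehat{G}$ is eventually constant, because the balls around $v$ are finite. Hence $V(G)$ is exactly the set of isolated points of $\widehat{G}$ (and similarly for $H$), and both $\widehat{G}$ and $\widehat{H}$ are compact metrisable spaces.

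Next, I would fix base points $o_G\in V(G)$, $o_H\in V(H)$, equip $\rand G$ and $\rand H$ with visual metrics, and introduce the \emph{shadow} $\text{Sh}(v)\sub\rand G$ of a vertex $v$, namely the set of those $\eta\in\rand G$ for which some geodesic ray from $o_G$ to $\eta$ stays within a fixed bounded distance of~$v$. The standard hyperbolic geometry gives that the shadows of vertices form a neighbourhood basis at every boundary point and that $\diam(\text{Sh}(v))$ in the visual metric is comparable to $e^{-\epsilon\, d_G(o_G,v)}$. Because $f$ is a homeomorphism between compact metric spaces it is uniformly continuous; in particular, the images $f(\text{Sh}(v))\sub\rand H$ shrink uniformly to a point as $d_G(o_G,v)\to\infty$.

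The bijection $\hat f$ is then produced by a back-and-forth argument, enumerating $V(G)$ and $V(H)$ and alternately extending $\hat f$ and $\hat f\inv$: given an unassigned $v\in V(G)$, pick an unassigned $w\in V(H)$ whose shadow $\text{Sh}(w)$ has small Hausdorff distance to $f(\text{Sh}(v))$ in the visual metric, and symmetrically in the other direction. Quasi-transitivity and local finiteness are essential here: both graphs have uniformly bounded vertex degrees and (by passing to orbit representatives) a uniformly controlled local structure, so that at any given scale there are always sufficiently many, and not too many, candidate vertices whose shadows approximate a prescribed closed subset of the boundary. Continuity of the extension at a boundary point $\eta$ then follows automatically: if $v_i\to\eta$ then $\text{Sh}(v_i)$ shrinks to $\eta$, hence $f(\text{Sh}(v_i))$ shrinks to $f(\eta)$, and by construction $\text{Sh}(\hat f(v_i))$ does the same, so $\hat f(v_i)\to f(\eta)$ in~$\widehat{H}$. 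Continuity at vertices is trivial since they are isolated, and applying the same argument to $f\inv$ yields a continuous inverse.

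The main obstacle is the bookkeeping in the back-and-forth step, which must simultaneously ensure (i) that at every stage there is an unassigned vertex of $H$ (resp.\ $G$) whose shadow is close enough to the prescribed target, and (ii) that the resulting map on $V(G)$ is genuinely a bijection. This is where quasi-transitivity is used in a nontrivial way: it yields a uniform lower bound on the number of vertices of $H$ at a given distance from $o_H$ whose shadows cover a given region of $\rand H$ to a given precision, which in turn guarantees that the greedy choice at each step can always be made while respecting the constraints imposed by the finitely many earlier choices.
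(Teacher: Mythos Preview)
The paper does not give a proof; it cites the lemma as a special case of a theorem of Steiner and Steiner on metric compactifications of~$\nat$ (and notes that Martin--\'Swi\k{a}tkowski's argument also adapts). The relevant input, which you identify correctly in your first paragraph, is that $\widehat{G}$ and~$\widehat{H}$ are compact metrisable spaces in which $V(G)$ and~$V(H)$ are exactly the countable dense sets of isolated points, with remainders $\rand G$ and~$\rand H$. The Steiner--Steiner theorem then asserts, in full generality, that any homeomorphism between the remainders of two metric compactifications of a countable discrete space extends to a homeomorphism of the compactifications. Neither hyperbolicity (beyond supplying the compactification) nor quasi-transitivity is used.

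Your back-and-forth on the isolated points is exactly the right skeleton --- it is in fact how one proves the cited theorem --- but you implement it through hyperbolic-specific tools (visual metrics, shadows) and then invoke quasi-transitivity to control those tools. That may be what your particular implementation needs, but it is an artefact of the method rather than of the lemma, so your assertion that quasi-transitivity is ``essential'' and ``used in a nontrivial way'' overstates its role. More concretely, the shadow-matching step is underspecified: you never tie the permitted Hausdorff error at stage~$n$ to~$n$, so the clause ``by construction $\text{Sh}(\hat f(v_i))$ does the same'' does not yet follow; and since $f$ is only a homeomorphism, $f(\text{Sh}(v))$ need not resemble any shadow in~$H$, so ``small Hausdorff distance to $f(\text{Sh}(v))$'' is the wrong target when $\text{Sh}(v)$ is not already small. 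The clean fix is to drop shadows and work directly in the compactification: match each new~$v$ to an unassigned $w\in V(H)$ with $d_{\widehat H}\big(w,f(\eta)\big)$ small for some $\eta\in\rand G$ close to~$v$ in~$\widehat G$, with the tolerance shrinking as the stage grows. This uses only that the isolated points accumulate at every remainder point, so an unassigned candidate always exists past the finitely many earlier choices; that is precisely the general topological argument the paper is invoking.
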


The next lemma describes the hyperbolic boundary of a \ta\ in terms of its factors and the involved tree.

\begin{lem}\label{lem_canonicalBoundMap}
Let $G$ and $H$ be locally finite hyperbolic \qt\ graphs and let $T$ be a semiregular tree with canonical bipartition $\{U,V\}$ of its vertex set.
Then there exists a canonical bijective map
\[
f\colon \rand T\cup\bigcup_{u\in U}\rand G^u\cup\bigcup_{v\in V}\rand H^v\to \rand (G+_T H)
\]
such that the following hold.
\begin{enumerate}[\rm (1)]
\item\label{itm_canonicalBoundMap1} The preimage of each connected component of $\rand(G+_T H)$ is a connected component of an element of
\[
\{\rand T\}\cup\{\rand G^u\mid u\in U\}\cup\{\rand H^v\mid v\in V\};
\]
\item\label{itm_canonicalBoundMap2} every sequence in some $G^u$ or $H^v$ that converges to some boundary point $\eta\in \bigcup_{u\in U}\rand G^u\cup\bigcup_{v\in V}\rand H^v$ converges to $f(\eta)$ in $G+_TH$;
\item\label{itm_canonicalBoundMap3} every sequence $(v_i)_{i\in\nat}$ with $v_i\in G^{t_i}$ or $v_i\in H^{t_i}$ such that $(t_i)_{i\in\nat}$ converges to $\eta\in\rand T$ converges to $f(\eta)$ in $G+_TH$.
\end{enumerate}\end{lem}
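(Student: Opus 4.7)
The plan is to first record that $G+_T H$ is hyperbolic via Lemma~\ref{lem_astQI+} and Theorem~\ref{thm_HMLR7.10}, and to note that Lemma~\ref{lem_QIinduceHomeo} lets us transfer boundary information between $G+_T H$ and $G\ast_T H$ whenever convenient. I then define $f$ piecewise. For $\eta\in\rand G^u$, I choose a geodesic ray $R$ in $G^u$ representing $\eta$; such a ray is a quasi-geodesic in $G+_T H$ by the argument underlying Lemma~\ref{lem_HMLR7.7+}, so it converges to a unique $\xi\in\rand(G+_T H)$, and I set $f(\eta):=\xi$. The definition on $\rand H^v$ is symmetric. For $\eta\in\rand T$ represented by a ray $t_0 t_1 t_2\ldots$ in $T$, I pick, for each $i$, a vertex $x_i$ in the adhesion set of the factor indexed by $t_i$ which corresponds to the edge $t_it_{i+1}$. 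Concatenating shortest paths between consecutive $x_i,x_{i+1}$ inside the appropriate factor, together with the amalgamating edges connecting successive factors, produces a quasi-geodesic ray in $G+_T H$ whose limit point is declared to be $f(\eta)$.

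Well-definedness rests on the fact that equivalent rays stay at bounded distance in $G+_T H$: for factor boundary points this uses that the inclusion $G^u\hookrightarrow G+_T H$ is a quasi-isometric embedding of hyperbolic spaces and hence induces an injection of boundaries, while for tree boundary points any two admissible choices of $x_i$ differ only by bounded jumps inside a single bounded-diameter adhesion set. Injectivity splits into three cases: distinct points in the same factor boundary remain distinct by the quasi-isometric embedding argument; points in two different factors are separated because any connecting geodesic must cross adhesion sets carved out by $T$ and so exits every common bounded neighbourhood; and tree-boundary points cannot coincide with factor-boundary points since the defining sequence for $f(\eta)$ with $\eta\in\rand T$ leaves every bounded neighbourhood of every single factor. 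Surjectivity is where the main work lies: given $\xi\in\rand(G+_T H)$ represented by a quasi-geodesic ray $R$, either $R$ is eventually contained in a bounded neighbourhood of a single factor, in which case $R$ projects to a quasi-geodesic ray in that factor and realises a point of $\rand G^u$ or $\rand H^v$, or $R$ visits infinitely many factors, and then the sequence of tree-vertices indexing the visited factors forms a ray of $T$, since bounded adhesion forces a quasi-geodesic to cross each edge of $T$ only boundedly often.

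It then remains to verify the three listed properties. Property~\ref{itm_canonicalBoundMap2} is immediate from the construction, since a geodesic ray in $G^u$ realises the convergence to $f(\eta)$ both in $G^u$ and in $G+_T H$. Property~\ref{itm_canonicalBoundMap3} follows by comparing the given sequence with the defining quasi-geodesic for $f(\eta)$: any vertex in $G^{t_i}$ or $H^{t_i}$ is at bounded distance from $x_i$ relative to the escape to infinity, while $t_i\to\eta$ in $T$ forces $x_i\to f(\eta)$ in $G+_T H$. For property~\ref{itm_canonicalBoundMap1}, I combine Lemma~\ref{lem_ConCompBoundAreEnds} with the corresponding decomposition of the end space of a tree amalgamation: each end of $G+_T H$ either lies within a single factor, whose contribution to $\rand(G+_T H)$ is the $f$-image of the connected boundary components of that factor, or escapes through the tree, giving a singleton component of $\rand(G+_T H)$ corresponding to an end of $T$, since $\rand T$ is totally disconnected by Lemma~\ref{lem_treeBound}. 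The main obstacle is the surjectivity clause, and in particular the tree-combinatorial argument that an unbounded quasi-geodesic which is not trapped in any single factor genuinely tracks a ray of $T$; this is precisely the place where the combination of bounded adhesion with hyperbolicity is needed.
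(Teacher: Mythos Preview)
Your proposal follows the same overall architecture as the paper's proof: define $f$ piecewise on the factor boundaries and on $\rand T$, check well-definedness, injectivity and surjectivity, then verify properties (\ref{itm_canonicalBoundMap1})--(\ref{itm_canonicalBoundMap3}) using Lemma~\ref{lem_ConCompBoundAreEnds} and Lemma~\ref{lem_treeBound}. The one substantive difference is that the paper opens by invoking Lemma~\ref{lem_QIandTA3.1} together with Lemma~\ref{lem_astQI+} to reduce to the case where the tree amalgamation has adhesion~$1$ and pairwise disjoint adhesion sets. After that reduction, geodesics in a factor are honest geodesics in $G+_T H$ by Lemma~\ref{lem_HMLR7.7+}, each tree edge corresponds to a single graph edge, and surjectivity becomes purely combinatorial: a geodesic ray that leaves a factor through its (one-vertex) adhesion set can never re-enter it, so the visited tree vertices literally form a ray. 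You instead work directly with quasi-geodesics and bounded adhesion throughout, which is also correct but needs the additional observation that a quasi-geodesic can cross any fixed bounded-diameter separator only boundedly many times. The paper's reduction buys cleaner arguments at the price of an initial appeal to Lemma~\ref{lem_QIandTA3.1}; your route is more self-contained but forces you to carry the quasi-geodesic constants through every step.

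One phrasing in your sketch of~(\ref{itm_canonicalBoundMap3}) is imprecise: a vertex of $G^{t_i}$ need not be at bounded distance from~$x_i$, since the factor may be unbounded. What you actually need is that any geodesic from a fixed basepoint to $v_i$ must pass through the adhesion sets corresponding to the initial segment of the ray $t_0t_1\ldots$ that $t_i$ shares, so the Gromov product of $v_i$ with $x_i$ tends to infinity. This is the correct argument and is implicit in the paper's construction as well.
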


\begin{proof}
Since \qis\ preserve hyperbolicity, we may apply Lemmas~\ref{lem_QIandTA3.1} and~\ref{lem_astQI+} to assume that $G\ast_TH$ is a \ta\ of adhesion~$1$ and distinct adhesion sets are disjoint.
Note for this that \qis\ map distinct boundary points to distinct boundary points and distinct connected components of the boundary to distinct connected components.

Let us define a map
\[
f\colon \rand T\cup\bigcup_{u\in U}\rand G^u\cup\bigcup_{v\in V}\rand H^v\to \rand (G+_T H).
\]
Let $u\in U$ and $\eta\in\rand G^u$.
Any geodesic ray in~$\eta$ is a geodesic ray in~$G+_T H$ as well by Lemma~\ref{lem_HMLR7.7+}.
Thus, two geodesic rays in $G\ast_T H$ that lie in~$G^u$ are equivalent in $G+_T H$ and thus lie in the same boundary point~$\mu$.
We set $f(\eta):=\mu$.
Analogously, we define the image of elements of $\rand G^v$ for $v\in V$.

Now we consider a boundary point $\eta$ of~$T$.
Note that since $T$ is a tree, its boundary points are just its ends.
Let $R$ be a ray in~$\eta$.
Since the \ta\ has adhesion~$1$, there is for each edge $uv$ of~$T$ with $u\in U$ and $v\in V$ a unique edge of $G+_T H$ that corresponds to $uv$ in that its incident vertices lies in $G^u$ and $H^v$ and get identified when constructing the \ta.
Since distinct adhesion sets are disjoint, it follows that for a subpath $u_0u_1u_2u_3$ of~$R$ the edges $e_1,e_2,e_3$, where $e_i$ corresponds to the edge $u_{i-1}u_i$, have the properties that they are distinct and $e_2$ separates $e_1$ and~$e_3$.
By joining $e_1$ and~$e_2$ by a shortest path inside $G^{u_1}$ or $H^{u_1}$, we obtain that $R$ defines a geodesic ray and no matter how we choose the shortest paths to connected the edges $e_1,e_2$, the resulting rays are equivalent and thus converge to the same boundary point $\mu$ of $G+_T H$.
We set $f(\eta):=\mu$.

While defining~$f$, we ensured that it is well-defined.
Let us show that $f$ is injective.
If we consider hyperbolic boundary points $\eta,\mu$ of distinct elements of
\[
X:=\{\rand T\}\cup\{\rand G^u\mid u\in U\}\cup\{\rand H^v\mid v\in V\},
\]
then each of $\eta,\mu$ belongs to either a hyperbolic boundary point of~$T$ or the hyperbolic boundary of a vertex of~$T$ and there is an edge of~$T$ separating these hyperbolic boundary points or vertices of~$T$.
The edge of $G+_T H$ corresponding to that edge of~$T$ separates the $f$-image of those hyperbolic boundary points or of the subgraph $G^u$ or $H^v$.
Thus, $f(\eta)$ and $f(\mu)$ lie in distinct ends of $G+_T H$ and hence are distinct.
If $\eta$ and~$\mu$ are distinct but in a common element of~$X$, then they lie in either $\rand G^u$ or $\rand H^v$ for some $u\in U$ or $v\in V$.
But as geodesic paths and rays in $G^u$ or $H^v$ are geodesic paths and rays in $G\ast_T H$, inequivalent rays in $G^u$ or $G^v$ stay inequivalent in $G+_TH$.
Thus, we have $f(\eta)\neq f(\mu)$ in this case, too.

To show that $f$ is surjective, let $\eta\in\rand (G+_T H)$ and let $R$ be a geodesic ray in~$\eta$.
Since the adhesion of $G\ast_T H$ is~$1$, there is either a subgraph $G^u$ or $H^v$ such that $R$ has all but finitely many vertices of that subgraph or not.
If $R$ meets every such subgraph in only finitely many vertices, let $W\sub V(T)$ consist of those vertices $u\in U$ and $v\in V$ for which $R$ meets $G^u$ and $H^v$.
Note that if~$R$ leaves $G^u$ or $H^v$ once, it has to do so through an adhesion set and since it cannot use the same edge again, it cannot enter the subgraph $G^u$ or $H^v$ anymore.
Thus, $W$ defines a ray in~$T$ and it is straight-forwards to see that the hyperbolic boundary point~$\mu$ of~$T$ that contains this ray is mapped to~$\eta$ by~$f$.
If $R$ has infinitely many vertices in a subgraph $G^u$ or $H^v$, say $G^u$, then it has a subray in~$G^u$ by the above argument that if it leaves $G^u$ once, it never reenters $G^u$.
This subray is geodesic in~$G^u$ since the adhesion sets have size~$1$.
It lies in some hyperbolic boundary point $\mu$ of~$G^u$ and we have $f(\mu)=\eta$ by construction.

So far, we constructed a canonical bijective map $f$ that satisfies (\ref{itm_canonicalBoundMap2}) and~(\ref{itm_canonicalBoundMap3}).
It remains to verify (\ref{itm_canonicalBoundMap1}).
For this, we note that the connected components of the hyperbolic boundary correspond to the ends of the graph by Lemma~\ref{lem_ConCompBoundAreEnds}.
Analogously as in the proof that $f$ is injective, it follows that distinct hyperbolic boundary points in the same end of~$G+_TH$ are mapped to hyperbolic boundary points of some $G^u$ or $G^v$ that lie in the same end of that graph.
\end{proof}

The following proposition is the main step towards the proof of Theorem~\ref{thm_MS1.1}.

\begin{prop}\label{prop_MS4.1}
	Let $G_1,G_2,H_1,H_2$ be locally finite infinite hyperbolic \qt\ graphs such that $\rand G_1, \rand G_2$ is homeomorphic to $\rand H_1,\rand H_2$, respectively.
	Let $G_1\ast G_2$ and $H_1\ast H_2$ be \ta s of bounded adhesion and finite identification.
	Then $\rand(G_1\ast G_2)$ and $\rand (H_1\ast H_2)$ are homeomorphic.
\end{prop}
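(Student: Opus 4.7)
The plan is to normalize both tree amalgamations, decompose their boundaries via Lemma~\ref{lem_canonicalBoundMap}, and then glue the given factor homeomorphisms piece by piece. First, I would apply Lemma~\ref{lem_QIandTA3.1} to both $G_1\ast G_2$ and $H_1\ast H_2$ to replace them with \qi\ tree amalgamations of adhesion~$1$ whose adhesion sets are distinct and cover each factor. Lemma~\ref{lem_QIinduceHomeo} ensures the hypothesis $\rand G_i\isom\rand H_i$ is inherited by the new factors, and Lemma~\ref{lem_astQI+} lets me pass from $G_1\ast G_2$ to the uncontracted $G_1+G_2$ (and similarly on the $H$-side) without changing the homeomorphism type of the amalgamated boundary. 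After this reduction, it suffices to compare $\rand(G_1+G_2)$ and $\rand(H_1+H_2)$ for amalgamations of adhesion~$1$.

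Next, I would invoke Lemma~\ref{lem_canonicalBoundMap} on both sides, obtaining canonical set-theoretic bijections
\[
\rand(G_1+G_2)\;\leftrightarrow\;\rand T_G\;\sqcup\;\bigsqcup_u \rand G_1^u\;\sqcup\;\bigsqcup_v \rand G_2^v,
\]
and analogously for $\rand(H_1+H_2)$ over the amalgamation tree $T_H$, with topologies characterised by the convergence criteria (2) and (3) of that lemma. To build the candidate homeomorphism $F$, I would first apply Lemma~\ref{lem_MS4.2} to extend each given $\rand G_i\to\rand H_i$ to a homeomorphism $\varphi_i:\widehat{G_i}\to\widehat{H_i}$, then choose a bijection $\alpha:V(T_G)\to V(T_H)$ respecting the bipartition, and finally define $F$ copy by copy: via $\varphi_i$ on each $\rand G_i^u$, sending it onto $\rand H_i^{\alpha(u)}$, and via a Cantor-set homeomorphism $\rand T_G\to \rand T_H$ extending $\alpha$ on the tree boundary. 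Continuity within each factor copy is then supplied by Lemma~\ref{lem_canonicalBoundMap}(2), and continuity at tree-boundary points by (3), provided $\alpha$ is chosen so that adjacencies in $T_G$ map to adjacencies in $T_H$ with compatible edge labels.

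The main obstacle is precisely arranging this compatibility: the extended $\varphi_i$ need not send adhesion-set vertices of $G_i$ to adhesion-set vertices of $H_i$. I expect to resolve this by refining the first step, either by applying Lemma~\ref{lem_QIandTA3.1} more carefully so that the adhesion sets produced on the $H$-side are precisely the $\varphi_i$-images of those on the $G$-side, or by exploiting that after the reduction the adhesion sets are single vertices and that bounded perturbations of the attaching vertices do not affect convergence in the hyperbolic compactification. In either case, the core technical task is an inductive construction of $\alpha$ edge by edge along $T_G$, in which each new vertex $\alpha(w)$ of $T_H$ is chosen so that its gluing vertex inside $H_i^{\alpha(w)}$ agrees with the $\varphi_i$-image of the corresponding gluing vertex inside $G_i^w$; checking that this inductive choice can always be made, and that the resulting $F$ is continuous at every tree-boundary point, is the step I expect to require the most care.
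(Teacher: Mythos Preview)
Your overall strategy matches the paper's: normalise via Lemma~\ref{lem_QIandTA3.1} and Lemma~\ref{lem_astQI+} to adhesion~$1$ with disjoint covering adhesion sets, extend the boundary homeomorphisms to $\widehat{G_i}\to\widehat{H_i}$ via Lemma~\ref{lem_MS4.2}, and then decompose both boundaries using Lemma~\ref{lem_canonicalBoundMap} to check continuity. After normalisation both amalgamation trees are the countably-infinite regular tree, so the paper simply takes $T_G=T_H=T$ and does not need a separate tree isomorphism~$\alpha$ or a Cantor homeomorphism on~$\rand T$.

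Where your proposal diverges from the paper is precisely at the obstacle you flag, and your proposed fix does not quite work as stated. Once you have placed a parent copy $H_i^{\alpha(t)}$ and know the image of the gluing vertex $p\in G_i^t$, the neighbour $\alpha(w)$ of $\alpha(t)$ in~$T$ is \emph{forced}: there is exactly one edge of~$T$ at $\alpha(t)$ whose adhesion vertex on the $H_i$-side is that image (adhesion sets are distinct singletons). Hence the gluing vertex $w_j$ on the new $H_{3-i}^{\alpha(w)}$-side is determined by the labelling $c_H$ and is not at your disposal; you cannot arrange $w_j=\varphi_{3-i}(u_j)$ by choosing~$\alpha(w)$. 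The paper's resolution is much simpler than either of your suggested workarounds: it keeps the tree identification fixed and instead modifies the factor homeomorphism on each new copy by post-composing with the transposition of the two vertices $w_j$ and $\varphi_{3-i}(u_j)$. Since vertices are isolated points of $\widehat{H_{3-i}}$, swapping two of them is still a homeomorphism, and now the adjusted map sends $u_j$ to~$w_j$ as required. This yields a global bijection $f\colon G_1+G_2\to H_1+H_2$ that respects the tree structure, after which continuity of the induced boundary map is checked via the convergence criteria of Lemma~\ref{lem_canonicalBoundMap} exactly as you outline.
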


\begin{proof}
According to Lemma~\ref{lem_QIandTA3.1}, we find locally finite connected \qt\ graphs $G_1',G_2',H_1',H_2'$ such that $G_i$, $H_i$ is \qi\ to~$G_i'$, to~$H_i'$ for $i=1,2$, such that $(G_1',G_2')$, $(H_1',H_2')$ are factorisations of graphs~$G$, $H$ that are \qi\ to \ta s $G_1\ast G_2$, $H_1\ast H_2$, respectively, such that $G_1'\ast G_2'$ and $H_1'\ast H_2'$ have adhesion~$1$, all their adhesion sets are disjoint and every vertex lies in some adhesion set.
As hyperbolicity is preserved by \qis, $G_1',G_2',H_1',H_2'$ are hyperbolic.
By Theorem~\ref{thm_HMLR7.10}, $G$ and $H$ are hyperbolic, too.
Since \qi\ hyperbolic graphs have homeomorphic hyperbolic boundaries by Lemma~\ref{lem_QIinduceHomeo}, the hyperbolic boundaries $\rand (G_1\ast G_2)$, $\rand (H_1\ast H_2)$ are homeomorphic to~$\rand G$, $\rand H$, respectively.
Thus, it suffices to prove the assertion under the assumption that the \ta\ is of adhesion~$1$, all adhesion sets are disjoint and the adhesion sets cover all vertices.
By Lemma~\ref{lem_astQI+} the graphs $G_1\ast G_2$, $H_1\ast H_2$ are \qi\ to $G_1+G_2$, to $H_1+H_2$, respectively.
Thus, it suffices to consider those graphs instead of the \ta s.

For $i=1,2$, let $g_i\colon\rand G_i\to\rand H_i$ be a homeomorphism and let $f_i\colon \widehat{G}_i\to\widehat{H}_i$ be a homeomorphism that extends $g_i$, which exists by Lemma~\ref{lem_MS4.2}.
Let $S^G_k$, $S^H_k$ be the adhesion sets in~$G_1$, in~$H_1$ and $T^G_\ell$, $T^H_\ell$ be the adhesion sets in~$G_2$, in~$H_2$, respectively.
Note that the amalgamation trees in both cases are the countably infinitely regular tree, i.\,e.\ we consider the \ta s $G_1\ast_T G_2$ and $H_1\ast_T H_2$.
Let $c_G$, $c_H$ be the labelings of the edges of~$T$ used for the \ta s $G_1\ast_T G_2$, for $H_1\ast_T H_2$, respectively.
We are going to construct a map $f\colon G_1+G_2\to H_1+H_2$ with the following properties, which obviously proves the assertion.
\begin{enumerate}[(1)]
\item\label{itm_MS4.1_1} $f$ is a bijection;
\item\label{itm_MS4.1_2} $f$ induces an automorphism $f_t$ of~$T$;
\item\label{itm_MS4.1_3} $f$ induces homeomorphisms $\widehat{G}_i^u\to \widehat{H}_i^{f_t(u)}$;
\item\label{itm_MS4.1_4} $f$ induces a homeomorphism $g\colon \rand(G_1+G_2)\to\rand(H_1+H_2)$.
\end{enumerate}
The homeomorphisms in~(3) will be closely related to the homeomorphisms~$f_i$.

Let $v_1,v_2,\ldots$ be an enumeration of $V(T)$ such that for every $i\in\nat$ the vertices $v_1,\ldots, v_i$ induce a subtree of~$T$.
Let $f_{i_1}^{v_1}$ be the map $G_{i_1}^{v_1}\to H_{i_1}^{v_1}$ that is induced by~$f_{i_1}$.
For $j>1$, let $u_j$ be the vertex in $G_{i_j}^{v_j}$ that separates $G_{i_j}^{v_j}$ from $G_{i_1}^{v_1}$ and let $t\in\{v_1,\ldots,v_{j-1}\}$ be the unique neighbour of~$v_j$ in that set.
Let $g_{i_j}^{v_j}$ be the map $G_{i_j}^{v_j}\to H_{i_j}^{v_j}$ induced by~$f_i$.

For $j>1$, let $w_j$ be the vertex in $H_{i_j}^{v_j}$ that separates $H_{i_j}^{v_j}$ from $H_{i_1}^{v_1}$.
Let $h_{i_j}^{v_j}$ be $g_{i_j}^{v_j}$ but the images of $u_j$ and $(g_{i_j}^{v_j})\inv(w_j)$ exchanged.
Note that $h_{i_j}^{v_j}$ still induces a homeomorphism $\widehat{G}_{i_j}^{v_j}\to\widehat{H}_{i_j}^{v_j}$ but now the vertex in $G^t_{3-j}$ that is adjacent to~$u_j$ in the graph $G_1+ G_2$ is mapped by $h_{i_{3-j}}^t$ to the neighbour of~$w_j$ in $H^t_{i_{3-j}}$ in the graph $H_1+H_2$.
Hence, the union of all $h_i^v$ for all $v\in V(T)$ defines a map $f\colon G_1+G_2\to H_1+H_2$ that maps vertices of $G_1+G_2$ that gets identified by constructing $G_1\ast G_2$ to those that gets identified by constructing $H_1\ast H_2$.

By construction, (\ref{itm_MS4.1_1})--(\ref{itm_MS4.1_3}) holds.
Since the compactifications of locally finite hyperbolic graphs are Hausdorff, it suffices to prove that $f$ induces a bijective continuous map $g\colon \rand(G_1+G_2)\to\rand(H_1+H_2)$.
For this, we use Lemma~\ref{lem_canonicalBoundMap} to get a map
\[
\varphi\colon \rand T\cup\bigcup_{u\in U}\rand G_1^u\cup\bigcup_{v\in V}\rand G_2^v\to \rand (G_1+_T G_2)
\]
and a map
\[
\psi\colon \rand T\cup\bigcup_{u\in U}\rand H_1^u\cup\bigcup_{v\in V}\rand H_2^v\to \rand (H_1+_T H_2)
\]
both having the properties as in Lemma~\ref{lem_canonicalBoundMap}.
Then $g:=\psi\circ\varphi\inv$ is a bijective map and it follows from the construction of $\varphi$ and $\psi$ that $f$ induces the restriction of~$g$ to those boundary points of $G_1+G_2$ that are are not in the image of~$\rand T$ by $\varphi$ or $\psi$.
It is not hard to see that $g$ is also induced by~$f$ on the remaining boundary points.
In order to show that $g$ is continuous, we show the slightly stronger assertion that $f\cup g$ is continuous.
For this, it suffices to consider a sequence $(x_i)_{i\in\nat}$ in $G_1+ G_2$ that converges to some $\eta\in\rand (G_1+ G_2)$ and show that $(f(x_i))_{i\in\nat}$ converges to $g(\eta)$.

If $\varphi\inv(\eta)\in \rand (G_i^u)$ for some $i\in\{1,2\}$ and $u\in V(T)$, let $(y_i)_{i\in\nat}$ be a sequence in $G_i^u$ such that $x_i=y_i$ if $x_i\in V(G_i^u)$ and such that $y_i$ separates $x_i$ from $G_i^u$ otherwise.
Then $(y_i)_{i\in\nat}$ converges to~$\eta$, too, and $(f(x_i))_{i\in\nat}$ and $(f(y_i))_{i\in\nat}$ converge to the same boundary point of $H_1+ H_2$ by construction.
Since $(f(y_i))_{i\in\nat}$ lies in $H_i^u$, it converges to $g(\eta)$ and hence $(f(x_i))_{i\in\nat}$ converges to $g(\eta)$.

If $\varphi\inv(\eta)$ lies in no $\rand(G_i^u)$, then it lies in $\rand T$.
Let $t_1t_2\ldots$ be a ray in~$T$ that converges to $\varphi\inv(\eta)$.
Let $(y_i)_{i\in\nat}$ be such that $y_i$ separates $x_i$ and~$\eta$ and such that $y_i$ lies in some $G_j^{t_k}$.
Then $(y_i)_{i\in\nat}$ converges to~$\eta$, too.
As is the previous case, $(f(x_i))_{i\in\nat}$ and $(f(y_i))_{i\in\nat}$ converge to the same boundary point of $H_1+ H_2$, which is $g(\eta)$ by construction.
Thus, $f\cup g$ is continuous.
\end{proof}

Now we are able to prove our main theorems.

\begin{proof}[Proof of Theorem~\ref{thm_MS1.1}]
We will prove the assertion by induction on the number of homeomorphism classes of the hyperbolic boundaries $\rand G_i$.
If there are no homeomorphism classes, then all graphs $G_i$ are finite and so are the graphs $H_i$.
By Lemma~\ref{lem_PartsFinite}, $G$ and~$H$ are \qi\ to $3$-regular trees and hence $G$ is \qi\ to~$H$.

Let us now assume that there is at least one homeomorphism class of hyperbolic boundaries~$\rand G_i$.
Let $G_{i_1},\ldots, G_{i_k}$, $H_{j_1},\ldots,H_{j_\ell}$ be representatives of the infinite \qiy\ types of $G_1,\ldots G_n$, of $H_1,\ldots, H_m$, respectively.
By Theorem~\ref{thm_QI1.4}, $G$ is \qi\ to either $G_{i_1}\ast G_{i_1}$, if $k=1$, or $G_{i_1}\ast\ldots\ast G_{i_k}$, if $k>1$, and similarly $H$ is \qi\ to either $H_{j_1}\ast H_{j_1}$ or $H_{j_1}\ast\ldots\ast H_{j_\ell}$.
Note that the homeomorphism types of the $G_{i_p}$'s and $H_{i_q}$'s are the same.
We apply Theorem~\ref{thm_QI1.4} again to duplicate factors such that $G$ is \qi\ to $G':=G_1',\ldots,G_{m'}'$ and $H$ is \qi\ to $H':=H_1',\ldots, H_{n'}'$, where $G_i'$ and $H_i'$ have homeomorphic hyperbolic boundaries.
As \qis\ do not change the homeomorphism type of the hyperbolic boundary by Lemma~\ref{lem_QIinduceHomeo}, $G$ and $G'$ as well as $H$ and $H'$ have homeomorphic hyperbolic boundaries and by Proposition~\ref{prop_MS4.1} the assertion follows by induction as all factors have a hyperbolic boundary and thus are infinite.
\end{proof}

\begin{proof}[Proof of Theorem~\ref{thm_MS1.3}]
If the factors have the same homeomorphism types of hyperbolic boundaries, it follows from Theorem~\ref{thm_MS1.1} that the hyperbolic boundaries of $G$ and~$H$ are homeomorphic.

Let us now assume that $\rand G$ and $\rand H$ are homeomorphic.
By Lemmas~\ref{lem_treeBound}, \ref{lem_ConCompBoundAreEnds}, \ref{lem_OneEndBoundInfinite} and \ref{lem_canonicalBoundMap}, the non-singular connected components of~$G$ are the hyperbolic boundaries of the $G_i$'s and the non-singular connected components of $\rand H$ are the hyperbolic boundaries of the $H_i$'s.
Thus, the homeomorphism types of $\{\rand G_1,\ldots, \rand G_n\}$ are those of $\{H_1,\ldots,\rand H_m\}$.
\end{proof}

\section*{Acknowledgement}

I thank M.~Pitz for a discussion about~\cite{StSt-GraphClosures}.

\providecommand{\bysame}{\leavevmode\hbox to3em{\hrulefill}\thinspace}
\providecommand{\MR}{\relax\ifhmode\unskip\space\fi MR }
\providecommand{\MRhref}[2]{%
	\href{http://www.ams.org/mathscinet-getitem?mr=#1}{#2}
}
\providecommand{\href}[2]{#2}

\end{document}